\newtheorem{theorem}{Theorem}
\newtheorem{proposition}[theorem]{Proposition}
\newtheorem{lemma}[theorem]{Lemma}
\theoremstyle{definition}
\newtheorem{definition}[theorem]{Definition}
\newtheorem{remark}[theorem]{Remark}
\numberwithin{equation}{section}
\newcommand{\ddiv}{\operatorname{div}}
\newcommand{\uHt}{\tilde{u}_H}
\newcommand{\smooth}{^{\mathit{reg}}}
\newcommand{\loc}{^{(\ell)}}
\newcommand{\sym}{\operatorname{sym}}
\newcommand{\card}{\operatorname{card}}
\newcommand{\T}{\mathcal{T}}
\newcommand{\Cor}{\mathcal{C}}
\newcommand{\nei}{\mathsf{N}}
\newcommand{\wcba}{\bm{\mathrm{wcba}}}
\begin{document}
% header fuer unsere persoenliche versionsuebersicht
\pagestyle{fancy}
\fancyhead{}
\setlength{\headheight}{14pt}
\renewcommand{\headrulewidth}{0pt}
% \fancyhead[R]{\footnotesize\textbf{\sffamily\today}}

\fancyhead[c]{\small \it Numerical stochastic homogenization}
% \title{Computation of effective diffusion tensors in stochastic homogenization}
\title{Numerical stochastic homogenization by quasilocal effective diffusion tensors}

\author{%
       D. Gallistl\thanks{%
                      Department of Applied Mathematics,
                      University of Twente,
                      7500 AE Enschede, The Netherlands}
        \and 
       {D. Peterseim\thanks{Institut f\"ur Mathematik,     
            Universit\"at Ausgburg,
         86159 Augsburg, Germany}}
       }
\date\today

\maketitle

\begin{abstract}
This paper proposes a numerical upscaling procedure for elliptic 
boundary value problems with 
diffusion tensors that vary randomly on small scales.
The method compresses the random partial differential operator to an effective quasilocal deterministic 
operator that represents the expected solution on a coarse
scale of interest.
Error estimates consisting of a~priori and a~posteriori terms are 
provided that allow one to quantify the impact of uncertainty in the 
diffusion coefficient on the expected effective response of the process. 
\end{abstract}

{\small
\noindent
\textbf{Keywords}
numerical homogenization, multiscale method, upscaling,
a~priori error estimates, a~posteriori error estimates,
uncertainty, modeling error estimate, model reduction

\noindent
\textbf{AMS subject classification}
% % 35J05,  %Laplacian operator, reduced wave equation (Helmholtz equation), Poisson equation
35R60, %PDEs with randomness
65N12,  %Stability and convergence of numerical methods
65N15,  %Error bounds
65N30,  %Finite elements, Rayleigh-Ritz and Galerkin methods, finite methods
73B27,  %Nonhomogeneous materials; homogenization 
74Q05  %Homogenization in equilibrium problems 
}

\section{Introduction}
Homogenization is a tool of mathematical modeling to identify 
reduced descriptions of the macroscopic response of multiscale models.
In the context of the prototypical elliptic model problem
\begin{equation*}
-\ddiv (\mathbf{A}_\varepsilon \nabla \mathbf{u}_\varepsilon) = f 
\end{equation*}
microscopic features on some characteristic length scale $\varepsilon$ 
are encoded in the diffusion coefficient $\mathbf{A}_\varepsilon$ and 
homogenization studies the limit as $\varepsilon$ tends to zero. 
It turns out that suitable limits represented by the so-called 
effective or homogenized coefficient exist in fairly general 
settings in the framework of $G$-, $H$-, or two-scale convergence
\cite{Spagnolo:1968,Gio75,Mur78,MR990867,MR1185639}. 
However, the effective coefficient is rarely given explicitly
and even its implicit representation based on cell problems  
 usually requires structural assumptions on the 
sequence of coefficients $\mathbf{A}_\varepsilon$ such as local periodicity 
and scale separation \cite{BLP78,JKO94}.
Moreover, in many interesting applications such as geophysics and material sciences where $\mathbf{A}_\varepsilon$ represents porosity or permeability,
complete explicit knowledge of the coefficient is unlikely.
The coefficient is rather the result of measurements that underlie
errors or it is the result of singular measurements combined with
inverse modeling. In any case, it is very likely that there is
uncertainty in the data $\mathbf{A}_\varepsilon$.
The question is how 
this uncertainty on the fine scale $\varepsilon$ changes the expected
macroscopic response of the process. 
For works on analytical stochastic homogenization we refer to the 
classical works 
\cite{Kozlov1979,PapanicolaouVaradhan1981,Yurinskii1986}
and the recent approaches
\cite{BourgeatPiatnitski2004},
\cite{GloriaOtto2011,GloriaOtto2012,GloriaNeukammOtto2014,GloriaNeukammOtto2015,GloriaOtto2015,DuerinckxGloriaOtto2016,GloriaOtto2017}, and
\cite{ArmstrongSmart2016,ArmstrongKuusiMourrat2017}.
Computational methods in stochastic homogenization
comprise
\cite{EMingZhang2005,ChenSavchuk2007,GintingMalqvistPresho2010,BalJing2011,LeBrisLegollThomines2014,JagalurSahniDoostanOberai2014,ZhangCiHou2015}
and the review article
\cite{AnantharamanEtAl_Review2012}.
However, important practical and theoretical questions remain open. 
Among them are the truncation of cell problems
(see \cite{Gloria2012m2an,Gloria2012proc,GloriaHabibi2016} for recent progress on this aspect),
the role of discretization 
on the different scales and its interaction with the homogenization process
as well as the incorporation of macroscopic boundary conditions.

This paper addresses all these issues to some extent. It provides a method to compute effective, deterministic
models such that the corresponding 
discrete solution is close to $\mathbb{E}[\mathbf{u}_\varepsilon]$, the expected value of 
$\mathbf{u}_\varepsilon$; closeness is meant in the $L^2$ sense, so that
a meaningful approximation is achieved on some coarse 
%One possible model situation is that
%the diffusion coefficient $\mathbf{A}_\varepsilon$ is a random
%field whose structure is given by a uniform triangulation
%$\T_\varepsilon$ of mesh-size $\varepsilon$.
%The coefficient is piecewise constant with respect to 
%$\T_\varepsilon$. 
scale of interest $H$ (observation scale) which is typically linked to some triangulation
$\T_H$ of mesh-size $H$.
The numerical method is based on the multiscale approach of
\cite{MalqvistPeterseim2014}, sometimes referred to as 
Localized Orthogonal Decomposition (LOD), that was developed for the 
deterministic case (see also \cite{HenningPeterseim2013,GallistlPeterseim2015,Peterseim:2015,Kornhuber.Peterseim.Yserentant:2016}).
The basis functions therein are constructed by local corrections 
(generally different from those of analytical homogenization theory)
that solve some elliptic fine-scale problem on localized patch domains.
Their supports are determined by oversampling
lengths $H\lvert\log H\rvert$, where $H$ denotes the mesh-size of a
finite element triangulation $\T_H$ on the observation scale.
This choice of oversampling is justified by the exponential decay of the
correctors away from their source \cite{MalqvistPeterseim2014,HenningPeterseim2013,KornhuberYserentant2015,Kornhuber.Peterseim.Yserentant:2016}. The method leads to quasi-optimal a~priori error estimates and
can dispense with any assumptions on scale separation.

The recent work \cite{GallistlPeterseim2017} gives a re-interpretation
of the method from \cite{MalqvistPeterseim2014} as a quasilocal
discrete integral operator for the deterministic case.
In a further compression step, this representation allows to extract
a piecewise constant diffusion tensor.
An application of this procedure for any atom $\omega$ in the
probability space leads to an integral operator
$\mathcal A_H$ (depending on the stochastic variable)
and a corresponding piecewise constant random field
$\mathbf{A}_H$ on the scale $H$.
It turns out that this viewpoint is useful in the stochastic setting
because it allows to average in the stochastic variable
over effective coefficients rather than over multiscale basis functions
and to thereby characterize the resulting effective model in terms
of quasi-local coefficients and even deterministic PDEs.
The averages are given by
$\bar{\mathcal A}_H:=\mathbb{E}[\mathcal A_H]$  
and $\bar{A}_H:= \mathbb{E}[\mathbf A_H]$
and constitute deterministic models
which we refer to as quasi-local ($\bar{\mathcal A}_H$) and
local ($\bar{A}_H$), respectively.
The proposed method covers the case of bounded polytopes, which 
appears still open in analytical stochastic homogenization.
The method itself can dispense with any a~priori information
on the coefficient. The validity of the discrete model is assessed
via an a~posteriori model error estimator.
In order to make the computation of 
$\bar{\mathcal A}_H$ and $\bar A_H$ feasible,
one can exploit the structure (if available) of the stochastic coefficient
$\mathbf{A}_\varepsilon$ as well as the underlying mesh.
Provided the dependence of the stochastic variable has a suitable structure,
sampling procedures for $\mathbf{A}_H$ are purely local and allow
to restrict the computations to reference configurations.

We provide error estimates for the expected error in the $L^2$ norm as well as the $L^2$ norm of the expected error. The upper bounds are combined from a~priori terms and a~posteriori
terms. The latter contributions are determined by the statistics
of the local fluctuations of the upscaled coefficient.
Numerical evidence for 
an uncorrelated
model coefficient suggests that the error 
estimator is not the dominant part in the error, as long as the usual
scaling $H\approx (\varepsilon/H)^{d/2}$ from the central limit 
theorem (CLT) is satisfied.

\smallskip
The structure of this article is as follows. 
Section~\ref{s:notation} introduces the general model problem,
relevant notation for data structures and function spaces,
and gives an example of a possible model situation.
Section~\ref{s:modelupscaling}
presents the upscaling procedure.
Section~\ref{s:erroranalysis} provides error estimates
in the $L^2$ norm.
Numerical experiments are presented in
Section~\ref{s:num}.
The comments of Section~\ref{s:conclusion} conclude the paper.

\smallskip
Standard notation on Lebesgue and Sobolev spaces
applies throughout this paper. 
The notation $a\lesssim b$ abbreviates $a\leq C b$ for some
constant $C$ that is independent of the mesh-size,
but may depend on the contrast of the coefficient $\mathbf{A}$;
$a\approx b$ abbreviates $a\lesssim b\lesssim a$.
The symmetric part of a quadratic matrix $M$ is
denoted by $\sym(M)$.

\section{Model problem and notation}\label{s:notation}
This section describes the model problem and some notation
on finite element spaces.
Finally, an example of a possible model situation is discussed.

\subsection{Model problem}
Let $(\Omega,\mathcal{F},\mathbb{P})$ be a probability space with set of events $\Omega$, $\sigma$-algebra $\mathcal{F}\subseteq 2^\Omega$ and probability measure $\mathbb{P}$. 
The expectation operator is denoted by $\mathbb{E}$.
Let $D\subseteq\mathbb R^d$ for $d\in\{1,2,3\}$ be
a bounded Lipschitz polytope. The set of admissible coefficients reads
\begin{equation}\label{e:classM}
\mathcal{M}(D,\alpha,\beta) 
= \left\{\begin{aligned}A\in 
  L^\infty(D;\mathbb{R}^{d\times d}_{\mathrm{sym}})&\text{ s.t. }\,\alpha|\xi|^2 \leq (A(x)\xi)\cdot\xi \leq \beta |\xi|^2\\&\text{for a.e. }x\in D\text{ and all }\xi\in\mathbb{R}^d
\end{aligned}
\right\}.
\end{equation}
 Here, $\mathbb{R}^{d\times d}_{\mathrm{sym}}$ denotes
 the set of symmetric $d\times d$ matrices.
Note that the elements of $A\in \mathcal{M}(D,\alpha,\beta)$ are fairly free to vary within the bounds $\alpha$ and $\beta$ and that we do not assume any frequencies of variation or smoothness. 

Let $\mathbf{A}$ be an $\mathcal{M}(D,\alpha,\beta)$-valued, pointwise symmetric random field with $\beta>\alpha>0$ and let, for the sake of readability, 
$f\in L^2(D)$ be deterministic. 
Throughout this article we suppress the 
characteristic length scale $\varepsilon$ of the diffusion coefficient
in the notation and write $\mathbf{A}$ instead of $\mathbf{A}_\varepsilon$.
Consider the model problem
\begin{equation}\label{e:modelstrong}
\left\{
\begin{aligned}
-\ddiv (\mathbf{A}(\omega)(x) \nabla \mathbf{u}(\omega)(x) ) &= f(x),&x\in D\\
\mathbf{u}(\omega)(x)&=0,&x\in \partial D
\end{aligned}
\right\}\quad\text{for almost all }\omega\in\Omega.
\end{equation}
Denote the energy space by
$V:=H^1_0(D)$.
The weak formulation of \eqref{e:modelstrong} seeks a $V$-valued random field $\mathbf{u}$ such that for almost all $\omega\in\Omega$
\begin{equation}\label{e:diff1drandweak}
 \int_D(\mathbf{A}(\omega)(x)\nabla\mathbf{u}(\omega))(x)\cdot\nabla v(x)\,dx = \int_D f(x) v(x)\,dx\quad\text{for all }v\in V.
\end{equation}
The reformulation of this problem in the Hilbert space $L^2(\Omega;V)$ of $V$-valued random fields with finite second moments leads to a coercive variational problem that seeks $\mathbf{u}\in L^2(\Omega;V)$ such that 
\begin{equation*}%\label{e:diff1drandweak2}
 \int_\Omega\int_D(\mathbf{A}(\omega)\nabla\mathbf{u}(\omega)(x))\cdot\nabla\mathbf{v}(\omega)(x)\,dx\,d\mathbb{P}(\omega) = \int_\Omega\int_D f(x) \mathbf{v}(\omega)(x)\,dx\,d\mathbb{P}(\omega)
\end{equation*}
holds for all $\mathbf{v}\in L^2(\Omega;V)$.
It is easily checked that this is a well-posed problem in the sense of the Lax-Milgram theorem with a coercive and bounded bilinear form
$$\begin{aligned}
 a: L^2(\Omega;V)\times L^2(\Omega;V)&\rightarrow \mathbb{R},\\(\mathbf{u},\mathbf{v})&\mapsto \int_\Omega\int_D(\mathbf{A}(\omega)(x)\nabla\mathbf{u}(\omega)(x))\cdot\nabla\mathbf{v}(\omega)(x)\,dx\,d\mathbb{P}(\omega)
\end{aligned}$$
and a bounded linear functional $F$ on $L^2(\Omega;V)$ given by
$$\mathbf{v}\mapsto  \int_\Omega\int_D f(x) \mathbf{v}(\omega)(x)\,dx\,d\mathbb{P}(\omega).$$
This shows that, for any $f\in L^2(D)$, there exists a unique solution 
$\mathbf{u}\in L^2(\Omega;V)$ with
\begin{equation*}
 \|\nabla\mathbf{u}\|_{L^2(\Omega;V)}
:=\left(\int_\Omega\int_D |\nabla(\mathbf{u}(\omega))(x)|^2\,dx \,d\mathbb{P}(\omega)\right)^{1/2}
\leq C(D)\alpha^{-1}\|f\|_{L^2(D)}. 
\end{equation*}
Though it would be possible, we disregard the possibility of more general
$f\in H^{-1}(D)$ or uncertainty in the right-hand side $f$ in this article. 

\begin{remark}
The parameter $\varepsilon$ refers to some scale that resolves the 
stochastic data. We do not assume any particular structure;
the coefficient 
$\mathbf{A}=\mathbf{A}_\varepsilon$ is not necessarily part of
some ergodic sequence. 
Our viewpoint is that of coarsening/reducing the given
model on the fixed scale $\varepsilon$ to the observation scale $H$
rather than that of the asymptotics for small $\varepsilon$.
\end{remark}

\subsection{Finite element spaces}
Let $\T_H$ be a quasi-uniform regular simplicial triangulation of $D$
and let $V_H$ denote the standard $P_1$ finite element space,
that is, the subspace of $V$ consisting of piecewise first-order
polynomials.
Given any subdomain $S\subseteq\overline D$, define its neighbourhood
via
\begin{equation*}
\nei(S):=\operatorname{int}
          \Big(\cup\{T\in\T_H\,:\,T\cap\overline S\neq\emptyset  \}\Big).
\end{equation*}
Furthermore, we introduce for any $m\geq 2$ the patch extensions
\begin{equation*}
\nei^1(S):=\nei(S)
\qquad\text{and}\qquad
\nei^m(S):=\nei(\nei^{m-1}(S)) .
\end{equation*}
Throughout this paper, we assume that the coarse-scale mesh $\T_H$
is quasi-uniform.
The global mesh-size reads 
$H:=\max\{\operatorname{diam}(T):T\in\T_H\}$.
Note that the shape-regularity implies that there is a uniform bound 
$C(m)$
on the number of elements in the $m$th-order patch,
$
\card\{ K\in\T_H\,:\, K\subseteq \overline{\nei^m(T)}\}
\leq C(m)
$
for all ${T\in\T_H}$.
The constant $C(m)$ depends polynomially on $m$.
The set of interior $(d-1)$-dimensional hyper-faces 
of $\T_H$ is denoted by
$\mathcal F_H$. For a piecewise continuous function $\varphi$,
we denote the jump across an interior edge by $[\varphi]_F$,
where the index $F$ will be sometimes omitted for brevity.
The space of piecewise constant functions
(resp.\ $d\times d$ matrix fields)
is denoted by
$P_0(\T_H)$ (resp.\ $P_0(\T_H;\mathbb R^{d\times d})$).

Let $I_H:V\to V_H$ be a 
surjective
quasi-interpolation operator that
acts as an $H^1$-stable and $L^2$-stable
quasi-local projection in the sense that
$I_H\circ I_H = I_H$ and that
for any $T\in\T_H$ and all $v\in V$ there holds
\begin{align*}
% \label{e:IHapproxstab}
H^{-1}\|v-I_H v\|_{L^2(T)} + \|\nabla I_H v \|_{L^2(T)}
&
\leq C_{I_H} \|\nabla v\|_{L^2(\nei(T))} 
\\
% \label{e:IHstabL2}
\|I_H v\|_{L^2(T)}
&
\leq C_{I_H} \|v\|_{L^2(\nei(T))} .
\end{align*}
Since $I_H$ is a stable projection from $V$ to $V_H$,
any $v\in V$ is quasi-optimally approximated by $I_H v$
in the $L^2(D)$ norm as well as in the $H^1(D)$ norm.
One possible choice
is to define $I_H:=I^{\mathit{c}}_H\circ\Pi_H$, where
$\Pi_H$ is the $L^2(D)$-orthogonal projection onto 
the space $P_1(\T_H)$ of piecewise affine (possibly discontinuous)
functions
and $I^{\mathit{c}}_H$ is the averaging operator that maps $P_1(\T_H)$ to $V_H$ by
assigning to each free vertex the arithmetic mean of the corresponding
function values of the neighbouring cells, that is, for any $v\in P_1(\T_H)$
and any free vertex $z$ of $\T_H$,
\begin{equation*}
% \label{e:EHdef}
(I^{\mathit{c}}_H(v))(z) =
           \sum_{\substack{T\in\T_H\\\text{with }z\in T}}v|_T (z) 
           \bigg/
           \card\{K\in\T_H\,:\,z\in K\}.
\end{equation*}
This choice of $I_H$ is employed in our numerical experiments.

\subsection{Discrete stochastic setting}\label{ss:Teps}
In this subsection we briefly describe one possible discrete stochastic
setting where the uncertainty is encoded by a triangulation
$\T_\varepsilon$.
Although it is not the most general coefficient that can be treated
with the methods described below, it appears as a natural model situation
in a multiscale setting and will therefore be utilized in the 
numerical experiments from Section~\ref{s:num}.

We assume that the triangulation $\T_\varepsilon$
describing the multiscale structure of $\mathbf{A}$ is a uniform
refinement of the triangulation $\T_H$ on the observation scale.
Let $\T_\varepsilon$ denote a uniform triangulation.
The probability space reads
\begin{equation*}
 \Omega = \prod_{T\in\T_\varepsilon} [\alpha,\beta]
        = [\alpha,\beta]^{\card\T_\varepsilon}.
\end{equation*}
Each $\omega=(\omega_T)_{T\in \T_\varepsilon}\in\Omega$
can be identified with a scalar
$\T_\varepsilon$-piecewise constant function 
$\iota\omega$ over $D$ with
$\iota\omega|_T = \omega_T$ for any $T\in\T_\varepsilon$.
The scalar random diffusion coefficient
$\mathbf{A}=\mathbf{A}_\varepsilon$
is a random variable 
$\mathbf{A}\in L^2(\Omega;\mathcal{M}(D,\alpha,\beta))$.
The values  are piecewise constant in space, that is
\begin{equation*}
\mathbf{A}(\bullet,\omega) = \iota\omega\in P_0(\T_\varepsilon)
 \quad\text{for any }\omega\in\Omega .
\end{equation*}
Of course, similar settings are possible for tensor-valued diffusion
coefficients.

\section{Upscaling method}\label{s:modelupscaling}
This section describes
the proposed upscaling methods.

\subsection{Upscaling with a quasi-local effective model}\label{ss:upscalingQloc}
This subsection describes the computation of a quasi-local effective
coefficient. The underlying model does not correspond to a PDE
but rather to a discrete integral operator on finite element
spaces. The method is very flexible in that it is not restricted 
to (quasi-)periodic situations and is able to include boundary
conditions.

The upscaling procedure presented here is based on the multiscale
approach of \cite{MalqvistPeterseim2014,HenningPeterseim2013}. 
For the deterministic case, it was shown in \cite{GallistlPeterseim2017}
that a variant of those methods corresponds to a
finite element system with a quasilocal discrete integral operator.
Its construction for the stochastic setting is described 
in the following.

Let $W:=\operatorname{ker}I_H\subseteq V$ denote the kernel of $I_H$.
The space $W$ is referred to as fine-scale space.
For any element $T\in\T_H$ define the extended element patch
$D_T:=\nei^\ell(T)$ of order $\ell$.
The nonnegative integer $\ell$ is referred to as the
\emph{oversampling parameter}.
As a crucial parameter in the design of the multiscale method,
it is inherent to all quantities in the upscaled model.
The parameter will always be chosen 
$\ell\approx\lvert\log H\rvert$.
For better readability we will suppress the explicit dependence on
$\ell$ whenever there is no risk of confusion, but stress the fact that
quantities like 
$\mathbf{q}_{T,j}$ $\Cor$, $\mathcal A_H$, etc.\
defined below should be understood as
$\mathbf{q}_{T,j}\loc$ $\Cor\loc$, $\mathcal A_H\loc$.

Let $W_{D_T}\subseteq W$ denote the space of functions from $W$
that vanish outside $D_T$.
For any $T\in\T_H$, any $j\in\{1,\dots,d\}$,
and any $v_H\in V_H$,
the function 
$\mathbf{q}_{T,j}\in L^2(\Omega;W_{D_T})$ solves
\begin{equation}\label{e:qTelementcorrELL}
\int_{D_T} \nabla w\cdot(\mathbf{A} \nabla \mathbf{q}_{T,j})\,dx
=
\int_{T} \nabla w \cdot(\mathbf{A} e_j)\,dx
\quad\text{for all }w\in W_{D_T}.
\end{equation}
Here $e_j$ ($j=1,\dots,d$) is the $j$th Cartesian unit vector.
The functions $\mathbf{q}_{T,j}$ are called element correctors.
We emphasize that the element correctors
$\mathbf{q}_{T,j}$
are $W_{D_T}$-valued random variables.
Given $v_H\in V_H$, we define the corrector 
$\Cor v_H\in L^2(\Omega;W)$
by
\begin{equation}\label{e:corexpansionELL}
\Cor  v_H
= \sum_{T\in\T_H} \sum_{j=1}^d (\partial_j v_H|_T)  \mathbf{q}_{T,j}.
\end{equation}
Again, the operator $\Cor$ depends on the uncertainty parameter
$\omega$.
Define the piecewise constant matrix field
$\mathcal A_H\in L^2(\Omega;P_0(\T_H\times\T_H;\mathbb{R}^{d\times d}))$
over $\T_H\times\T_H$,
for $T,K\in \T_H$ by
\begin{equation}\label{e:AHdef}
(\mathcal A_H|_{T,K})_{jk}
:= 
\frac{1}{|T|\,|K|}
\left(\delta_{T,K}\int_T \mathbf{A}_{jk}\,dx -
 e_{j}\cdot\int_K \mathbf{A}\nabla \mathbf{q}_{T,k}\,dx 
\right)
\end{equation}
($j,k=1,\dots,d$)
where $\delta$ is the Kronecker symbol.
The bilinear form 
$\mathfrak{a}: V\times V\to L^2(\Omega;\mathbb R)$ is given by
$$
\mathfrak{a}(v_H,z_H):=
\int_D\int_D 
   \nabla v_H(x) \cdot ( \mathcal A_H(x,y)\nabla z_H(y))\,dy\,dx 
   \quad\text{for any } v_H,z_H\in V_H.
$$
As pointed out in \cite{GallistlPeterseim2017}, 
there holds for all finite element functions $v_H,z_H\in V_H$ that
\begin{equation}\label{e:frakaeq}
\int_D \nabla v_H\cdot (\mathbf{A}\nabla(1-\Cor) z_H)\,dx
=
\mathfrak{a}(v_H,z_H).
\end{equation}

\begin{remark}
The nonlocal operator $\mathcal A_H$ is sparse in the sense that
$\mathcal A_H|_{T,K}$ equals zero whenever 
$\operatorname{dist}(T,K)\gtrsim \ell H$ 
for $T,K\in\T_H$.
It is therefore referred to as quasilocal.
\end{remark}

\begin{remark}
The left-hand side of \eqref{e:frakaeq} corresponds to 
a Petrov-Galerkin method with finite element trial functions and
modified test functions. Such multiscale basis functions were
proposed in \cite{MalqvistPeterseim2014}.
For averaging procedures over the stochastic variable, it will
turn out that the representation from the right-hand side
of \eqref{e:frakaeq} is preferable. In other words, we average
the nonlocal integral kernel rather than multiscale basis functions.
Therefore we employ the variant from \cite{GallistlPeterseim2017}
where in the discretization the right-hand side of the PDE is
only tested with standard finite element functions,
while in the original method \cite{MalqvistPeterseim2014} the
right hand side was tested with multiscale test functions.
Those would be random variables in our case. 
\end{remark}

If the oversampling parameter $\ell$ is chosen in the order of
magnitude $\mathcal{O}(\lvert\log H\rvert)$, it can be shown
(see e.g., \cite[proof of Prop.~6]{GallistlPeterseim2017})
that the bilinear form $\mathfrak{a}$ is coercive and continuous
\begin{equation}\label{e:fracacoerc}
\|\nabla v_H\|_{L^2(D)}
  \lesssim \mathfrak{a}(v_H,v_H) 
\lesssim \|\nabla v_H\|_{L^2(D)}
\quad\text{for all }v_H\in V_H
\end{equation}
for any $\omega\in\Omega$.
Hence, there exists a unique solution 
$\mathbf{u}_H\in L^2(\Omega;V_H)$ to
\begin{equation}\label{e:bfuHdef}
\mathfrak{a}(\mathbf{u}_H,v_H)
= (f,v_H)_{L^2(D)}
\quad\text{for all }v_H\in V_H.
\end{equation}

Is is known that the method of \cite{MalqvistPeterseim2014}
produces quasi-optimal results for every fixed $\omega$.
More precisely, for the variant considered here,
\cite[Prop.~1]{GallistlPeterseim2017} states
\begin{equation}\label{e:wcbaest}
\frac{\| \mathbf{u}(\omega)-\mathbf{u}_H(\omega)\|_{L^2(D)}}{\|f\|_{L^2(D)}}
\lesssim
H^2 + \wcba(\mathbf{A}(\omega),\T_H).
\end{equation}
The term $\wcba(\mathbf{A}(\omega),\T_H) )$ denotes
the worst-case best-approximation error
\begin{equation}\label{e:wcbadef}
\wcba(\mathbf{A}(\omega),\T_H) 
:=
\sup_{g\in L^2(D)\setminus\{0\}}
\inf_{v_H\in V_H} \frac{\|u(g,\mathbf{A}(\omega))- v_H\|_{L^2(D)}}{\|g\|_{L^2(D)}} 
\end{equation}
where for $g\in L^2(D)$, $u(g,\mathbf{A}(\omega))\in V$ solves 
the deterministic 
model problem with diffusion coefficient $\mathbf{A}(\omega)$
and right-hand side $g$.
In particular, the right-hand side of \eqref{e:wcbaest} is always
controlled by $H \|f\|_{L^2(D)}$.

The approximation by a deterministic model is based on the 
averaged integral kernel
$\bar{\mathcal A}_H :=\mathbb{E}[ \mathcal{A}_H]$.
In view of \eqref{e:AHdef},
the values of the piecewise constant integral kernel $\bar{\mathcal A}_H$
on two simplices $T,K\in\T_H$ are given by
\begin{equation}\label{e:barAHdef}
(\bar{\mathcal A}_H|_{T,K})_{jk}
= 
\frac{1}{|T|\,|K|}
\left(\delta_{T,K}\int_T \mathbb{E}[\mathbf{A}_{jk}]\,dx -
 e_{j}\cdot\int_K \mathbb{E}[\mathbf{A}\nabla \mathbf{q}_{T,k}]\,dx  
\right) .
\end{equation}
The corresponding bilinear form
$\bar{\mathfrak{a}}(\cdot,\cdot)$
given by
$$
\bar{\mathfrak{a}}(v_H,z_H):=
\int_D\int_D 
   \nabla v_H(x) \cdot (\bar{ \mathcal A}_H(x,y)\nabla z_H(y))\,dy\,dx 
   \quad\text{for any } v_H,z_H\in V_H.
$$ 
The discrete solution $u_H\in V_H$ to the quasilocal deterministic
model is given by
\begin{equation}\label{e:uHdef}
\bar{\mathfrak{a}}(u_H,v_H)
= (f,v_H)_{L^2(D)}
\quad\text{for all }v_H\in V_H.
\end{equation}

\begin{remark}
In practice, the stochastic averages in \eqref{e:barAHdef} 
are approximated through sampling procedures. This is indeed feasible
because for some $T\in\T_H$ and $\omega\in\Omega$, the computation of 
$\mathcal{A}_H|_{T,K}(\omega)$ for any $K\in\T_H$ corresponds to 
the solution of problem \eqref{e:qTelementcorrELL}, which is posed
on the quasilocal neighbourhood $D_T$ of $T$.
\end{remark}

\subsection{Compression to a  local deterministic coefficient}
Given the quasilocal upscaled coefficient, one may ask whether there
exists a suitable approximation by a PDE model.
In order to provide a fully local model, a further 
compression step is introduced \cite{GallistlPeterseim2017}.
The nonlocal bilinear form
$\mathfrak{a}(\cdot,\cdot)$ is approximated 
by a quadrature-like procedure as follows.
Define the piecewise constant coefficient
\begin{equation*}
\mathbf{A}_H\in L^2(\Omega;P_0(\T_H;\mathbb R^{d\times d}))
\quad\text{by}\quad
\mathbf{A}_H|_T:=
\sum_{K\in\T_H} |K|\, \mathcal{A}_H|_{T,K}.
\end{equation*}
For fixed $\omega\in\Omega$,
the tensor field  $\mathbf{A}_H(\omega)$ 
is the local effective diffusion coefficient
of \cite{GallistlPeterseim2017} on the mesh $\T_H$.
In particular,
$\mathbf{A}_H$ still depends on $x$ and $\omega$.
We define the deterministic diffusion tensor by
\begin{equation*}
 \bar{A}_H := \mathbb{E}[\mathbf{A}_H] .
\end{equation*}
By linearity of the expectation operator,
$\bar{A}_H$ is equivalently obtained by compressing the 
averaged operator $\bar{\mathcal A}_H$.

It is not guaranteed a~priori that $\bar A_H$ is uniformly
positive definite.
In what follows we therefore assume that 
$\bar{A}_H\in \mathcal{M}(D,\alpha/2,2\beta)$.
This condition can be checked a~posteriori.
We denote by $\uHt\in V_H$ the solution to the following
finite element system
\begin{equation}\label{e:uHtdef}
 \int_D \nabla \uHt \cdot (\bar{A}_H\nabla v_H)\,dx
  = (f,v_H)_{L^2(D)}
 \quad\text{for all } v_H\in V_H.
\end{equation}
This effective equation is the discretization of a PDE.
As described in Subsection~\ref{ss:analysisfullylocal},
the coefficient $\bar{A}_H$ can be regularized to some
$\bar{A}_H\smooth$ that leads to comparable accuracy.

\section{Error analysis}\label{s:erroranalysis}
This section provides $L^2$ error estimates for the 
upscaling schemes. The estimates combine 
a~priori and a~posteriori terms.
The measure for quantifying the error is the 
$L^2(\Omega;L^2(D))$ norm, denoted by
\begin{equation*}
||| \mathbf{v} |||
:=
\sqrt{\mathbb{E}[ \| \mathbf{v} \|_{L^2(D)}^2] }.
\end{equation*}
We will also provide error estimates for the $L^2$ norm
of the expected error.

\subsection{Error estimate for the quasilocal method}
\begin{definition}[model error estimator]\label{d:modelest}
For any $T\in\T_H$, denote
\begin{equation*}X(T) :=
\max_{\substack{K\in\T_H\\ K\cap\nei^\ell(T)\neq\emptyset}}
  \lvert T\rvert 
   \, 
   \Big\lvert
    \mathcal A_H|_{T,K}-\bar{ \mathcal A}_H|_{T,K}
   \Big\rvert  .
\end{equation*}
The model error estimator $\gamma$ is defined by
\begin{equation*}
\gamma:=\max_{T\in\T_H} \Big( \sqrt{\mathbb{E}[X(T)^2]}\Big)
\Bigg/
\Big(\max_{T\in\T_H}\max_{K\in\T_H} 
    \mathbb{E}[\big\lvert\bar{ \mathcal A}_H|_{T,K}
   \big\rvert ] \Big).
\end{equation*}
\end{definition}

\begin{remark}[normalization of $\gamma$]\label{r:gammascaling}
Throughout the analysis of this paper, the constants hidden in the 
notation $\lesssim$ may involve the contrast $\beta/\alpha$. 
We 
use
the scaling of $\gamma$ as in Definition~\ref{d:modelest}.
\end{remark}

The random variable $X$ measures local fluctuations of 
$\mathcal A_H$. Its expectation determines the model error estimator 
$\gamma$ that is part of the upper bound in the subsequent error 
estimate. It is a term to be computed a~posteriori. 

\begin{remark}
Note that we have not assumed any particular structure of the 
coefficient $\mathbf{A}$. Information on the validity of the 
discrete model is instead extracted from the a~posteriori model
error estimator $\gamma$. It is expected that small values of
$\gamma$ require a certain scale separation in the stochastic variable
in the sense of the CLT scaling.
If, for example, $\mathbf{A}$ is i.i.d.\ over $\T_\varepsilon$
(with $\T_\varepsilon$ from Subsection~\ref{ss:Teps}),
then the value of $\gamma$ is basically determined by the ratio
$(\varepsilon/H)^{d/2}$.
\end{remark}

\begin{lemma}\label{l:graduHdiff}
Let $\ell\approx \lvert\log H\rvert$.
Let $\mathbf{u}_H$ solve \eqref{e:bfuHdef} and let
$u_H$ solve \eqref{e:uHdef} with right-hand side
$f\in L^2(D)$.
Then, for $\rho:=\lvert\log H\rvert$,
\begin{align*}
|||\nabla (\mathbf{u}_H-u_H)|||
\lesssim 
 \rho^d
 \gamma
 \|f\|_{L^2(D)}
\end{align*}
for the model error estimator $\gamma$ from
Definition~\ref{d:modelest}.
\end{lemma}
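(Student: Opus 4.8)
The plan is a Strang-type energy estimate. Since $\mathbf{u}_H$ from \eqref{e:bfuHdef} and $u_H$ from \eqref{e:uHdef} solve problems with the \emph{same} right-hand side $(f,\cdot)_{L^2(D)}$, the error between them is driven purely by the discrepancy between the random kernel $\mathcal{A}_H$ and its mean $\bar{\mathcal{A}}_H$, which I will control elementwise by the fluctuation variables $X(T)$ of Definition~\ref{d:modelest}.

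First, fix $\omega$ and set $\mathbf{e}:=\mathbf{u}_H(\omega)-u_H\in V_H$. Subtracting \eqref{e:uHdef} from \eqref{e:bfuHdef} gives, for all $v_H\in V_H$, the Galerkin-type relation $\mathfrak{a}(\omega)(\mathbf{e},v_H)=\bigl(\bar{\mathfrak{a}}-\mathfrak{a}(\omega)\bigr)(u_H,v_H)$. Testing with $v_H=\mathbf{e}$ and invoking the uniform coercivity \eqref{e:fracacoerc} (valid for every $\omega$ precisely because $\ell\approx|\log H|$) yields $\|\nabla\mathbf{e}\|_{L^2(D)}^2\lesssim\bigl(\bar{\mathfrak{a}}-\mathfrak{a}(\omega)\bigr)(u_H,\mathbf{e})$. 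Averaging \eqref{e:fracacoerc} also shows that $\bar{\mathfrak{a}}$ is coercive, hence $\|\nabla u_H\|_{L^2(D)}\lesssim\|f\|_{L^2(D)}$, a bound I keep for the end.

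Next I estimate the defect. Because $\mathcal{A}_H,\bar{\mathcal{A}}_H$ are piecewise constant on $\T_H\times\T_H$ and $u_H,\mathbf{e}$ are piecewise affine, $\bigl(\bar{\mathfrak{a}}-\mathfrak{a}(\omega)\bigr)(u_H,\mathbf{e})=\sum_{T\in\T_H}\sum_{K\in\T_H}\bigl(\int_T\nabla u_H\bigr)\cdot\bigl((\bar{\mathcal{A}}_H-\mathcal{A}_H(\omega))|_{T,K}\bigr)\bigl(\int_K\nabla\mathbf{e}\bigr)$, and by sparsity of $\mathcal{A}_H$ only $K$ with $K\cap\nei^\ell(T)\neq\emptyset$ contribute. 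I bound each summand by the product of Euclidean norms, insert $|(\bar{\mathcal{A}}_H-\mathcal{A}_H(\omega))|_{T,K}|\le X(T)(\omega)/|T|$ from Definition~\ref{d:modelest}, and use $|\int_T\nabla u_H|=|T|^{1/2}\|\nabla u_H\|_{L^2(T)}$ on a single simplex (likewise for $\mathbf{e}$); by quasi-uniformity the volume factors cancel, leaving $\|\nabla\mathbf{e}\|_{L^2(D)}^2\lesssim\sum_{T\in\T_H}X(T)(\omega)\,\|\nabla u_H\|_{L^2(T)}\sum_{K:\,K\cap\nei^\ell(T)\neq\emptyset}\|\nabla\mathbf{e}\|_{L^2(K)}$. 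Bounding the inner sum by Cauchy--Schwarz and the patch-cardinality estimate $C(m)\lesssim m^d$ produces a factor $\ell^{d/2}\|\nabla\mathbf{e}\|_{L^2(\nei^{\ell+1}(T))}$; then taking $\mathbb{E}$, applying Cauchy--Schwarz in $\Omega$ to $X(T)\,\|\nabla\mathbf{e}\|_{L^2(\nei^{\ell+1}(T))}$, Cauchy--Schwarz over $T$, and the finite-overlap bound $\sum_T\|\nabla\mathbf{e}\|_{L^2(\nei^{\ell+1}(T))}^2\lesssim\ell^d\|\nabla\mathbf{e}\|_{L^2(D)}^2$ leads to $|||\nabla\mathbf{e}|||^2\lesssim\ell^d\,\bigl(\max_T\sqrt{\mathbb{E}[X(T)^2]}\bigr)\,\|\nabla u_H\|_{L^2(D)}\,|||\nabla\mathbf{e}|||$. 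Absorbing $|||\nabla\mathbf{e}|||$, inserting $\|\nabla u_H\|_{L^2(D)}\lesssim\|f\|_{L^2(D)}$ and $\ell\approx\rho$, and accounting for the normalization of $\gamma$ in Definition~\ref{d:modelest} gives the assertion.

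The scheme is routine; the effort goes into two places. First, the mesh-weight bookkeeping: one must keep the volumes $|T|$, $|K|$ straight so that the estimate closes on the \emph{normalized} estimator $\gamma$ rather than a coarser surrogate — this is exactly why $X(T)$ in Definition~\ref{d:modelest} carries the prefactor $|T|$, which is tailored to cancel the $\frac{1}{|T|\,|K|}$ in \eqref{e:AHdef}; if a purely additive bound of the defect does not suffice to produce the denominator $\max_{T,K}|\bar{\mathcal{A}}_H|_{T,K}|$, one would instead compare the perturbed and averaged bilinear forms multiplicatively (a Schur-type argument). Second, the combinatorics of the two nested oversampling sums must be arranged so that only the single power $\rho^{d}$ survives rather than $\rho^{2d}$, which relies on quasi-uniformity and on $C(m)$ growing only polynomially. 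One should also verify that the coercivity constant in \eqref{e:fracacoerc} is genuinely independent of $H$ before performing the absorption step, since $\mathbf{e}$ occurs on both sides of the inequality.
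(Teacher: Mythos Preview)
Your proposal is correct and follows essentially the same Strang-type argument as the paper: coercivity of $\mathfrak{a}$ reduces the estimate to the defect $(\bar{\mathfrak{a}}-\mathfrak{a})(u_H,\mathbf{e})$, which is expanded over $\T_H\times\T_H$, controlled termwise by $X(T)$, and then summed via Cauchy--Schwarz and the finite overlap of the $\ell$-patches to produce the factor $\ell^d$. The only cosmetic difference is the order of operations---the paper divides by $\|\nabla\mathbf{e}_H\|_{L^2(D)}$ pointwise in $\omega$ \emph{before} squaring and taking $\mathbb{E}$ (exploiting that $u_H$ is deterministic), whereas you take $\mathbb{E}$ first and then apply Cauchy--Schwarz in $\Omega$; both routes give the same bound, and your handling of the normalization of $\gamma$ matches the paper's convention in Remark~\ref{r:gammascaling}.
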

\begin{proof}
Denote 
$\mathbf{e}_H:=\mathbf{u}_H-u_H$.
The coercivity 
\eqref{e:fracacoerc} of the multiscale bilinear form for any atom
$\omega\in\Omega$ and the representation as integral operator
reveal
\begin{equation*}
\begin{aligned}
 \|\nabla \mathbf{e}_H\|_{L^2(D)}^2
 \lesssim
 \mathfrak{a}(\mathbf{e}_H,\mathbf{e}_H)
 =
 \int_D\int_D 
   \nabla (\mathbf{u}_H(x)-u_H(x)) \cdot 
             ( \mathcal A_H(x,y)\nabla \mathbf{e}_H(y))\,dy\,dx .
\end{aligned}
\end{equation*}
Abbreviate
$\mathcal{E}|_{T,K}
   :=
    \bar{ \mathcal A}_H|_{T,K} - \mathcal A_H|_{T,K}$.
Adding and subtracting $\bar{ \mathcal A}_H(x,y)$
together with the discrete solution properties of $\mathbf{u}_H$ and
$u_H$ lead to
\begin{equation*}
\begin{aligned}
 \|\nabla \mathbf{e}_H\|_{L^2(D)}^2
 &
 \lesssim
 \int_D\int_D 
   \nabla u_H(x) \cdot 
     (\bar{ \mathcal A}_H(x,y)-(\mathcal A_H(x,y))  
                       \nabla \mathbf{e}_H(y))\,dy\,dx
 \\
 &
 =
 \sum_{T\in\T_H}\sum_{\substack{K\in\T_H\\ K\cap\nei^\ell(T)\neq\emptyset}}
  \lvert T\rvert \lvert K\rvert 
    \; \nabla u_H|_T \cdot (\mathcal{E}|_{T,K} \nabla \mathbf{e}_H|_K)
\end{aligned}
\end{equation*}
where it was used that $\nabla u_H$ and $\nabla \mathbf{e}_H$ are piecewise
constant.
For any fixed $T\in\T_H$, the shape regularity of the mesh
and equivalence of norms in the finite-dimensional space
$\mathbb R^N$ with $N=\mathcal{O}(\ell^d)$ lead to
\begin{equation*}
\begin{aligned}
&
\sum_{\substack{K\in\T_H\\ K\cap\nei^\ell(T)\neq\emptyset}}
  \lvert T\rvert \lvert K\rvert 
    \; \nabla u_H|_T \cdot (\mathcal{E}|_{T,K} \nabla \mathbf{e}_H|_K)
\\
&\qquad
\lesssim
X(T) \lvert T\rvert^{1/2} \lvert \nabla u_H|_T\rvert
    \sum_{\substack{K\in\T_H\\ K\cap\nei^\ell(T)\neq\emptyset}}
     \lvert K\rvert^{1/2} \lvert\nabla \mathbf{e}_H|_K\rvert
\\
&\qquad\lesssim
\ell^{d/2}
   X(T) \|\nabla u_H\|_{L^2(T)} \|\nabla \mathbf{e}_H\|_{L^2(\nei^\ell(T))} .
\end{aligned}
\end{equation*}
The combination of the foregoing two displayed estimates with
the Cauchy inequality and the finite overlap of the patch
domains $\nei^\ell(T)$ containing 
$\mathcal{O}(\ell^d)$ elements
therefore proves
\begin{equation*}
\begin{aligned}
 \|\nabla \mathbf{e}_H\|_{L^2(D)}^2
 &
 \lesssim
 \ell^d
 \sqrt{\sum_{T\in\T_H} X(T)^2 \|\nabla u_H\|_{L^2(T)}^2}
 \|\nabla \mathbf{e}_H\|_{L^2(D)} .
\end{aligned}
\end{equation*}
After dividing by $\|\nabla \mathbf{e}_H\|_{L^2(D)}$,
taking squares and the
expectation, we arrive at
\begin{equation*}
\begin{aligned}
|||\nabla \mathbf{e}_H|||^2
 &
 \lesssim
 \ell^{2d}
 \mathbb{E}\left[\sum_{T\in\T_H}X(T)^2 \|\nabla u_H\|_{L^2(T)}^2
            \right] .
\end{aligned}
\end{equation*}
This and the stability
of the discrete problem for $u_H$ prove
\begin{equation*}
\begin{aligned}
|||\nabla \mathbf{e}_H|||^2
 &
 \lesssim
 \ell^{2d}
 \max_{T\in\T_H} \Big( \mathbb{E}[X(T)^2]\Big)
 \|f\|_{L^2(D)}^2
\lesssim 
 \ell^{2d}
 \gamma^2
 \|f\|_{L^2(D)}^2.
\end{aligned}
\end{equation*}
This concludes the proof.
\end{proof}

\begin{proposition}[error estimate for the quasilocal method]
       \label{p:errorestimateQlocal}
Let $\ell\approx \lvert\log H\rvert$.
Let $\mathbf{u}$ solve \eqref{e:diff1drandweak} and let
$u_H$ solve \eqref{e:uHdef} with right-hand side
$f\in L^2(D)$.
Then, for $\rho:=\lvert\log H\rvert$,
\begin{equation}\label{e:errEstQuasiEq1}
\begin{aligned}
||| \mathbf{u}-u_H |||
&
\lesssim 
(
H^2 + \mathbb{E}[\wcba(\mathbf{A},\T_H)]
+
\rho^d  \gamma )
 \|f\|_{L^2(D)} 
\\
&
\lesssim 
(H + \rho^d  \gamma )  \|f\|_{L^2(D)}
\end{aligned}
\end{equation}
for the model error estimator $\gamma$ from
Definition~\ref{d:modelest}.
Furthermore, the following higher-order error estimate holds for the 
norm of the expected error
\begin{equation}\label{e:errEstQuasiEq2}
\begin{aligned}
\| \mathbb{E}[\mathbf{u}]-u_H \|_{L^2(D)}
&
\lesssim 
(
H^2 + \mathbb{E}[\wcba(\mathbf{A},\T_H)]
+
\rho^{2d}  \gamma^2 )
 \|f\|_{L^2(D)} .
\end{aligned}
\end{equation}
\end{proposition}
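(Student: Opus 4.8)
The plan is to combine two ingredients: (i) the quasi-optimality estimate \eqref{e:wcbaest} for the fully stochastic multiscale solution $\mathbf{u}_H$, applied atom-wise and then integrated over $\Omega$; and (ii) the comparison estimate from Lemma~\ref{l:graduHdiff} between $\mathbf{u}_H$ and the deterministic solution $u_H$. For the first bound in \eqref{e:errEstQuasiEq1}, I would write $\mathbf{u}-u_H = (\mathbf{u}-\mathbf{u}_H)+(\mathbf{u}_H-u_H)$ and use the triangle inequality in the $|||\cdot|||$ norm. The first term is controlled by squaring \eqref{e:wcbaest}, taking expectations, and using $\mathbb{E}[\wcba(\mathbf{A},\T_H)^2]^{1/2}\le \text{(something)}$; one should be slightly careful here — either one keeps $\sqrt{\mathbb{E}[\wcba(\mathbf A,\T_H)^2]}$ and notes it is dominated by the $H$-bound, or (as the statement suggests) one simply uses the crude bound $\wcba(\mathbf A,\T_H)\lesssim H$ pointwise, so that both $\mathbb{E}[\wcba]$ and $\sqrt{\mathbb E[\wcba^2]}$ are $\lesssim H$ and the distinction is harmless. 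The second term is bounded by $\||\nabla(\mathbf u_H-u_H)\||\lesssim \rho^d\gamma\|f\|$ from Lemma~\ref{l:graduHdiff}, combined with the Friedrichs inequality $\|v\|_{L^2(D)}\lesssim\|\nabla v\|_{L^2(D)}$ on $V$. The second line of \eqref{e:errEstQuasiEq1} then follows from $H^2\lesssim H$ and $\mathbb{E}[\wcba]\lesssim H$.

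For the higher-order estimate \eqref{e:errEstQuasiEq2} on the norm of the expected error, the key observation is that $\mathbb{E}[\mathbf u]$ and $u_H$ are both \emph{deterministic}, so one should not lose a power by passing through $|||\cdot|||$. Instead I would insert $\mathbb{E}[\mathbf u_H]$ as an intermediate: $\mathbb{E}[\mathbf u]-u_H = \mathbb{E}[\mathbf u-\mathbf u_H] + (\mathbb{E}[\mathbf u_H]-u_H)$. The first piece satisfies $\|\mathbb{E}[\mathbf u-\mathbf u_H]\|_{L^2(D)}\le\mathbb{E}\|\mathbf u-\mathbf u_H\|_{L^2(D)}\lesssim (H^2+\mathbb{E}[\wcba(\mathbf A,\T_H)])\|f\|$ directly from \eqref{e:wcbaest} by Jensen (no squaring needed). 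The second piece, $\mathbb{E}[\mathbf u_H]-u_H$, is where the quadratic gain in $\gamma$ must come from: both solve a discrete problem with the \emph{same} right-hand side $f$, and $\bar{\mathfrak a}=\mathbb{E}[\mathfrak a]$, so $u_H=\mathbb{E}[\text{the solution of the averaged form}]$ but $\mathbb{E}[\mathbf u_H]$ solves the averaged form tested against $f$ only in expectation — the defect is $\mathbb{E}[\mathfrak a(\mathbf u_H,v_H)]-\bar{\mathfrak a}(\mathbb E[\mathbf u_H],v_H)=\mathbb{E}[(\mathfrak a-\bar{\mathfrak a})(\mathbf u_H,v_H)]=\mathbb{E}[(\mathfrak a-\bar{\mathfrak a})(\mathbf u_H-u_H,v_H)]$ because $\mathbb{E}[(\mathfrak a-\bar{\mathfrak a})(u_H,v_H)]=0$ since $u_H$ is deterministic. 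Testing with $v_H$, using coercivity of $\bar{\mathfrak a}$, estimating the bilinear perturbation exactly as in the proof of Lemma~\ref{l:graduHdiff} (yielding a factor $\rho^d$ and an $X(T)$), and then applying Cauchy–Schwarz in $\Omega$ to pair $\mathbb{E}[X(T)\,\|\nabla(\mathbf u_H-u_H)\|]\le \sqrt{\mathbb E[X(T)^2]}\,\sqrt{\mathbb E[\|\nabla(\mathbf u_H-u_H)\|^2]}$, the first factor gives $\gamma$ and the second gives another $\rho^d\gamma\|f\|$ by Lemma~\ref{l:graduHdiff}; together this is $\rho^{2d}\gamma^2\|f\|$.

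The main obstacle is getting the $\gamma^2$ rather than $\gamma$ in \eqref{e:errEstQuasiEq2}: one must resist simply bounding $\|\mathbb E[\mathbf u_H]-u_H\|\le\||\mathbf u_H-u_H\||$, which would only give $\rho^d\gamma$. The trick is the cancellation $\mathbb{E}[(\mathfrak a-\bar{\mathfrak a})(u_H,v_H)]=0$, which lets one replace $\mathbf u_H$ by $\mathbf u_H-u_H$ inside the stochastic fluctuation, so that the fluctuation of the coefficient is multiplied by an already-small quantity (itself of size $\rho^d\gamma\|f\|$). A secondary technical point is bookkeeping of the logarithmic factors: the perturbation estimate from Lemma~\ref{l:graduHdiff} contributes $\ell^d\approx\rho^d$ each time it is invoked, and it is invoked twice in \eqref{e:errEstQuasiEq2}, which is exactly why $\rho^{2d}$ appears. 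I would also remark that Remark~\ref{r:gammascaling}'s normalization of $\gamma$ is what makes the ratio in Definition~\ref{d:modelest} dimensionally consistent with the $\|f\|_{L^2(D)}$ scaling on the right-hand side.
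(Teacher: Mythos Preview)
Your argument for \eqref{e:errEstQuasiEq1} matches the paper's proof exactly: triangle inequality with the intermediate $\mathbf{u}_H$, the pointwise bound \eqref{e:wcbaest} integrated over $\Omega$, and Friedrichs plus Lemma~\ref{l:graduHdiff} for the second piece. Your remark about $\mathbb{E}[\wcba]$ versus $\sqrt{\mathbb{E}[\wcba^2]}$ is well taken and, as you note, harmless.

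For \eqref{e:errEstQuasiEq2} your route differs from the paper's. The paper uses an Aubin--Nitsche duality argument: with $\mathbf{e}_H=\mathbf{u}_H-u_H$ it introduces dual solutions $\mathbf{z}_H$ and $z_H$ of $\mathfrak{a}(\cdot,\mathbf{z}_H)=(\mathbb{E}[\mathbf{e}_H],\cdot)$ and $\bar{\mathfrak{a}}(\cdot,z_H)=(\mathbb{E}[\mathbf{e}_H],\cdot)$, obtains the Galerkin orthogonality $\mathbb{E}[\mathfrak{a}(\mathbf{e}_H,z_H)]=0$, and then bounds $\|\mathbb{E}[\mathbf{e}_H]\|^2=\mathbb{E}[\mathfrak{a}(\mathbf{e}_H,\mathbf{z}_H-z_H)]\lesssim|||\nabla\mathbf{e}_H|||\,|||\nabla(\mathbf{z}_H-z_H)|||$ by two independent applications of Lemma~\ref{l:graduHdiff}. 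Your approach instead works primally, exploiting the cancellation $\mathbb{E}[(\mathfrak{a}-\bar{\mathfrak{a}})(u_H,v_H)]=0$ to replace $\mathbf{u}_H$ by $\mathbf{u}_H-u_H$ inside the fluctuating bilinear form. This is a legitimate idea and morally equivalent, but there is a technical slip in your Cauchy--Schwarz step. Repeating the estimate from the proof of Lemma~\ref{l:graduHdiff} with $\mathbf{u}_H-u_H$ in the first slot attaches $X(T)$ to a \emph{random} quantity, so that after taking expectations you face $\mathbb{E}\bigl[X(T)^2\,\|\nabla(\mathbf{u}_H-u_H)\|_{L^2(T)}^2\bigr]$, which does \emph{not} factor as $\mathbb{E}[X(T)^2]\cdot\mathbb{E}[\|\nabla(\mathbf{u}_H-u_H)\|_{L^2(T)}^2]$ without an independence assumption. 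Your suggested Cauchy--Schwarz in $\Omega$ would at best produce $\sqrt{\mathbb{E}[(\max_T X(T))^2]}$, which in general exceeds $\gamma=\max_T\sqrt{\mathbb{E}[X(T)^2]}$.

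This is repairable: run the perturbation estimate with the roles of the two slots swapped so that $X$ is attached to the \emph{deterministic} test function $v_H=\mathbb{E}[\mathbf{u}_H]-u_H$; then $\mathbb{E}[\hat X(K)^2\|\nabla v_H\|_{L^2(K)}^2]=\mathbb{E}[\hat X(K)^2]\,\|\nabla v_H\|_{L^2(K)}^2$ factorizes trivially and Cauchy--Schwarz in $\Omega$ cleanly separates out $|||\nabla(\mathbf{u}_H-u_H)|||$. The resulting estimator $\hat\gamma$ is the transposed analogue of $\gamma$ and coincides with it up to mesh constants by quasi-uniformity. The paper's duality argument avoids this bookkeeping altogether, because in each of the two invocations of Lemma~\ref{l:graduHdiff} the function sitting in the slot that carries $X(T)$ is deterministic ($u_H$ and $z_H$, respectively), so the crucial factorization is automatic.
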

\begin{proof}
Recall that $\mathbf{u}_H$ denotes the solution to
\eqref{e:bfuHdef}.
We
start
from the triangle inequality
\begin{equation}\label{e:triIneqErr}
||| \mathbf{u}-u_H |||
\leq 
||| \mathbf{u}-\mathbf{u}_H |||
+
||| \mathbf{u}_H- u_H  |||.
\end{equation}
The first term on the right-hand side of \eqref{e:triIneqErr}
is bounded with the estimate
\eqref{e:wcbaest}
\begin{equation}\label{e:errEstTerm1}
\begin{aligned}
||| \mathbf{u}-\mathbf{u}_H |||
\lesssim
(H^2 +  \mathbb{E}[\wcba(\mathbf{A},\T_H)] )\|f\|_{L^2(D)}.
\end{aligned}
\end{equation}
The second term on the right-hand side of \eqref{e:triIneqErr}
is controlled through Friedrichs' inequality and 
Lemma~\ref{l:graduHdiff}, so that we obtain the first stated
estimate of \eqref{e:errEstQuasiEq1}.
The second follows from the observation that
$\wcba(\mathbf{A},\T_H) \lesssim H$.

For the proof of \eqref{e:errEstQuasiEq2}, we employ a duality
argument.
Denote 
$\mathbf{e}_H:=\mathbf{u}_H-u_H$ and 
let $\mathbf{z}_H\in L^2(\Omega;V)$ solve
\begin{equation}\label{e:bfzHdef}
\mathfrak{a} (v_H,\mathbf{z}_H) = (\mathbb{E}[\mathbf{e}_H],v_H)_{L^2(D)}
\quad\text{for all } v_H\in V_H
\quad\mathbb{P}\text{-a.s}.
\end{equation}
Let $z_H\in V_H$ denote the solution to
\begin{equation}\label{e:zHdef}
\bar{\mathfrak{a}} (v_H,z_H) = (\mathbb{E}[\mathbf{e}_H],v_H)_{L^2(D)}
\quad\text{for all } v_H\in V_H.
\end{equation}
Then, \eqref{e:bfzHdef} implies
\begin{equation*}
\|\mathbb{E}[\mathbf{e}_H]\|_{L^2(D)}^2
=
\mathbb{E}[(\mathbb{E}[\mathbf{e}_H],\mathbf{e}_H)_{L^2(D)}]
=
\mathbb{E}[\mathfrak{a} (\mathbf{u}_H-u_H,\mathbf{z}_H)].
\end{equation*}
Furthermore, \eqref{e:bfuHdef},
the definition of $\bar{\mathfrak{a}}$ and \eqref{e:uHdef} lead to the Galerkin orthogonality
\begin{equation*}
\mathbb{E}[\mathfrak{a} (\mathbf{u}_H-u_H,z_H)]
= (f,z_H)_{L^2(D)} - \mathbb{E}[\mathfrak{a} (u_H,z_H)]
= (f,z_H)_{L^2(D)} - \bar{\mathfrak{a}} (u_H,z_H)
=0.
\end{equation*}
Thus,
\begin{equation*}
\begin{aligned}
\|\mathbb{E}[\mathbf{e}_H]\|_{L^2(D)}^2
&=
\mathbb{E}[\mathfrak{a} (\mathbf{u}_H-u_H,\mathbf{z}_H-z_H)]
\lesssim
||| \nabla(\mathbf{u}_H-u_H)|||\,|||\nabla(\mathbf{z}_H-z_H)|||.
\end{aligned}
\end{equation*}
Each of the terms on the right-hand side can be bounded with
help of Lemma~\ref{l:graduHdiff}
because
\eqref{e:bfzHdef} and \eqref{e:zHdef} correspond to
\eqref{e:bfuHdef} and \eqref{e:uHdef} where the right-hand side
$f$ is replaced by $\mathbb{E}[\mathbf{e}_H]$.
Therefore,
\begin{equation*}
\|\mathbb{E}[\mathbf{e}_H]\|_{L^2(D)}^2
\lesssim
 \rho^{2d} \gamma^2 \|f\|_{L^2(D)} \|\mathbb{E}[\mathbf{e}_H]\|_{L^2(D)}.
\end{equation*}
This proves
$\|\mathbb{E}[\mathbf{e}_H]\|_{L^2(D)}
\lesssim  \rho^{2d} \gamma^2 \|f\|_{L^2(D)}$.
In order to conclude the proof of \eqref{e:errEstQuasiEq2},
we use the triangle inequality
\begin{equation*}
\|\mathbb{E}[\mathbf{u}]-u_H\|_{L^2(D)}
\leq
\|\mathbb{E}[\mathbf{u}-\mathbf{u}_H]\|_{L^2(D)}
+
\|\mathbb{E}[\mathbf{e}_H]\|_{L^2(D)}
\end{equation*}
and observe that Jensen's inequality implies
$\|\mathbb{E}[\mathbf{u}-\mathbf{u}_H]\|_{L^2(D)}
\leq ||| \mathbf{u}-\mathbf{u}_H ||| $.
Altogether
\begin{equation*}
\|\mathbb{E}[\mathbf{u}]-u_H\|_{L^2(D)}
\lesssim
||| \mathbf{u}-\mathbf{u}_H |||
+
\rho^{2d} \gamma^2 \|f\|_{L^2(D)} 
\end{equation*}
and the combination with \eqref{e:errEstTerm1} 
implies \eqref{e:errEstQuasiEq2}.
\end{proof}

\subsection{Error estimate for the fully local method}
\label{ss:analysisfullylocal}
While the quasilocal method admits an error estimate under
mild regularity assumptions on the solution,
the error estimate for the fully local method is restricted to
the planar case and provides sublinear rates depending on the 
$W^{s,q}$ regularity of the solution to the deterministic model
problem with some regularized coefficient.
More precisely, it was shown in
\cite[Lemma~7]{GallistlPeterseim2017} that, given $\bar{A}_H$,
there exists a regularized coefficient
$\bar{A}_H\smooth\in W^{1,\infty}(D;\mathbb R^{d\times d})$
such that
1) The piecewise integral mean is conserved, i.e.,
\begin{equation*}
\int_T \bar{A}_H\smooth \,dx = \int_T \bar{A}_H\,dx
\quad\text{for all }T\in \T_H.
\end{equation*}
2) The eigenvalues of $\sym(\bar{A}_H\smooth)$ lie in the interval
$[\alpha/4,4\beta]$.
3) The derivative satisfies the bound
\begin{equation*}
\|\nabla \bar{A}_H\smooth\|_{L^\infty(D)}
\leq C
\eta(\bar{A}_H)
\end{equation*}
for some constant $C$ that depends on the shape-regularity of
$\T_H$
and for the expression
\begin{equation}\label{e:etadef}
\eta(\bar{A}_H) :=
H^{-1} \| [\bar{A}_H] \|_{L^\infty(\mathcal F_H)}
\big(1+\alpha^{-1}\| [\bar{A}_H] \|_{L^\infty(\mathcal F_H)}\big)
\Big/ \big((\alpha+\beta)/2\big) .
\end{equation}
Here $[\cdot]$ defines the inter-element jump and
$\mathcal F_H$ denotes the set of
interior hyper-faces of $\T_H$.
The coefficients
$\bar{A}_H$ and $\bar{A}_H\smooth$ lead to the same finite element
solution.
Let $u\smooth\in V$ denote the solution
\begin{equation}\label{e:PDEsmooth}
\int_\Omega\nabla {u\smooth}\cdot ( \bar{A}_H\smooth \nabla v) \,dx =
 F(v)\quad\text{for all } v\in V.
\end{equation}
In particular,  the integral conservation property
for $\bar{A}_H\smooth$ stated above implies that
$\uHt$ is the finite element approximation to
$u\smooth$.
The solution $u\smooth$ with respect to the 
regularized coefficient $\bar{A}_H\smooth$ serves to quantify smoothness
in terms of elliptic regularity.

\begin{proposition}[error estimate for the fully local method]
       \label{p:errorestimateLocal}
Assume that the solution $u\smooth$ to
\eqref{e:PDEsmooth} belongs to $H^{1+s}(D)$
for some $0<s\leq 1$.
Then, for $f\in L^2(D)$
and $\rho:=\lvert\log H\rvert$,
\begin{equation*}
||| \mathbf{u}-\uHt |||
\lesssim 
H \|f\|_{L^2(D)}
+
\rho^d  \gamma  \|f\|_{L^2(D)} 
+
 H^s \rho^{s+d/2}
 \;\big(1+\eta(\bar{A}_H\loc)\big)^{s}
 \|f \|_{L^2(D)}
\end{equation*}
for $\gamma$ from Definition~\ref{d:modelest}. If the domain $D$ is convex, then $s$ can be chosen as $s=1$.
\end{proposition}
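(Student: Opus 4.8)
The plan is to decompose the error $\mathbf{u}-\uHt$ via the chain of intermediate solutions that have already been set up in the paper, namely
\[
\|\mathbf{u}-\uHt\|_{L^2(D)}
\le
|||\mathbf{u}-u_H|||
+
\|u_H-\uHt\|_{L^2(D)} ,
\]
and then to take the square root of the expectation of the square, so that the first term is bounded directly by Proposition~\ref{p:errorestimateQlocal} (the quasilocal estimate), which contributes the $H\|f\|_{L^2(D)}$ and $\rho^d\gamma\|f\|_{L^2(D)}$ terms. Since $u_H$ and $\uHt$ are both deterministic, $|||u_H-\uHt|||=\|u_H-\uHt\|_{L^2(D)}$, so the whole task reduces to a deterministic estimate between the quasilocal finite element solution $u_H$ of \eqref{e:uHdef} and the compressed-coefficient finite element solution $\uHt$ of \eqref{e:uHtdef}. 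I would phrase the remaining work as bounding $\|u_H-\uHt\|_{L^2(D)}$ by $H^s\rho^{s+d/2}(1+\eta(\bar{A}_H))^s\|f\|_{L^2(D)}$ (with $s=1$ in the convex case).

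For that deterministic step the strategy is to exploit that $\uHt$ is, by the integral-conservation property of $\bar{A}_H\smooth$ recalled before the statement, exactly the $P_1$ finite element approximation of the PDE solution $u\smooth$ of \eqref{e:PDEsmooth}, while $u_H$ is the solution of the quasilocal (nonlocal) discretization $\bar{\mathfrak{a}}$. The cleanest route is a duality/Galerkin argument in the spirit of the proof of Proposition~\ref{p:errorestimateQlocal}: test the difference of the two discrete equations against a suitable dual object, use the coercivity \eqref{e:fracacoerc} of $\bar{\mathfrak{a}}$ (which holds with the same constants as $\mathfrak{a}$ by linearity of $\mathbb{E}$), and identify the consistency error $\bar{\mathfrak{a}}(v_H,w_H)-\int_D\nabla v_H\cdot(\bar{A}_H\nabla w_H)\,dx$ between the nonlocal kernel and its local compression. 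This consistency error is precisely what is controlled in \cite[Lemma~7 and the surrounding analysis]{GallistlPeterseim2017}; it produces the factor $H^s(1+\eta(\bar{A}_H))^s$ from the $W^{1,\infty}$ bound on $\bar{A}_H\smooth$ and the $H^{1+s}$ regularity of $u\smooth$, and the oversampling length $\ell\approx\rho$ enters through the locality radius $\ell H$ of $\mathcal A_H$, contributing the $\rho^{s+d/2}$ factor exactly as the $\ell^{d/2}$-type counting arguments did in Lemma~\ref{l:graduHdiff}. I would then convert the resulting $H^1$-type bound on $u_H-\uHt$ into an $L^2$ bound, either directly (the compression error is already measured in a negative norm) or by an Aubin--Nitsche duality step using the elliptic regularity of the dual problem for $\bar{A}_H\smooth$, which is available in the planar / convex situation and accounts for the restriction to $d=2$ and to sublinear rates $H^s$ for nonconvex $D$.

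Finally I would assemble the three contributions: $H\|f\|_{L^2(D)}$ and $\rho^d\gamma\|f\|_{L^2(D)}$ from the quasilocal estimate of Proposition~\ref{p:errorestimateQlocal}, and $H^s\rho^{s+d/2}(1+\eta(\bar{A}_H))^s\|f\|_{L^2(D)}$ from the deterministic compression estimate, invoking the a~priori stability $\|f\|_{H^{-1}(D)}\lesssim\|f\|_{L^2(D)}$ and Friedrichs' inequality where needed to pass from gradients to $L^2$, and noting that convexity of $D$ upgrades $s$ to $1$ via $H^2(D)$ elliptic regularity for $u\smooth$. The main obstacle I anticipate is the deterministic compression estimate: one has to carefully track how the nonlocality of $\mathcal A_H$ (supported within distance $\mathcal{O}(\ell H)$) interacts with the Lipschitz regularization $\bar{A}_H\smooth$ and the finite-element approximation, and to see that the combined error is genuinely of order $H^s$ with only a polylogarithmic $\rho^{s+d/2}$ loss rather than an algebraic one; this is essentially the content of the deterministic analysis in \cite{GallistlPeterseim2017}, so the real work is to transcribe that argument verbatim with the regularized coefficient $\bar{A}_H\smooth$ in place and to check the bookkeeping of the $\rho$-powers, after which the stochastic part is an immediate application of Jensen's inequality and the already-proven Proposition~\ref{p:errorestimateQlocal}.
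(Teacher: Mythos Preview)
Your approach is correct and matches the paper's proof exactly: triangle inequality to split into $|||\mathbf{u}-u_H|||$ (handled by Proposition~\ref{p:errorestimateQlocal} with $\mathbb{E}[\wcba(\mathbf{A},\T_H)]\lesssim H$) plus the deterministic $\|u_H-\uHt\|_{L^2(D)}$. The only difference is that the paper does not re-derive the deterministic compression estimate at all; it simply quotes the ready-made bound $\|\nabla(u_H-\uHt)\|_{L^2(D)}\lesssim H^s\rho^{s+d/2}(1+\eta(\bar{A}_H\loc))^s\|f\|_{L^2(D)}$ from \cite[Proposition~8]{GallistlPeterseim2017} and passes to $L^2$ via Friedrichs, so your sketched duality/consistency argument and the Aubin--Nitsche step are unnecessary extra work here.
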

\begin{proof}
The triangle inequality reads
\begin{equation*}
|||\mathbf{u}-\uHt |||
\leq
||| \mathbf{u}-u_H |||
+\| u_H-\uHt\|_{L^2(D)} .
\end{equation*}
The first term is estimated via
Proposition~\ref{p:errorestimateQlocal} and the observation that
the term $\mathbb{E}[\wcba(\mathbf{A},\T_H)]$ is bounded
by some constant times $H$.
It remains to estimate the second, purely deterministic term.
The difference of $u_H$ and $\uHt$ was already estimated in
\cite[Proposition~8]{GallistlPeterseim2017} as
\begin{equation*}
\|\nabla(u_H-\uHt)\|_{L^2(D)}
\lesssim
H^s \lvert\log H\rvert^{s+d/2}
 \;\big(1+\eta(\bar{A}_H\loc)\big)^s
 \|f \|_{L^2(D)}.
\end{equation*}
This concludes the proof.
\end{proof}

\section{Numerical illustration}\label{s:num}
We consider the planar square domain $D=(0,1)^2$ with homogeneous
Dirichlet boundary and the right-hand side $f\equiv 1$.
The finite element meshes are uniform red refinements of the 
triangulation displayed in Figure~\ref{f:initialmesh}.
We adopt the setting of Subsection~\ref{ss:Teps} and 
the mesh $\T_\varepsilon$ has mesh-size
$\varepsilon=\{2^{-5},2^{-6},2^{-7}\}\sqrt{2}$. The coefficient is scalar 
i.i.d.\ and, on each cell of $\T_\varepsilon$, it is uniformly distributed
in the interval $[\alpha,\beta]=[1,10]$.
The fine-scale mesh for the solution of the corrector problems
and the reference solution $\mathbf{u}_h$ has mesh-size
$h=2^{-9}\sqrt{2}$. All expected values are replaced with suitable empirical means.

Figure~\ref{f:L2L2errorsiid}
displays the relative errors 
$||| \mathbf{u}-u_H|||$ and 
$|||\mathbf{u}-\tilde u_H|||$ 
for the solution $u_H\in V_H$ to the quasilocal effective model \eqref{e:uHdef}
and 
the solution $\tilde u_H\in V_H$ to the local effective model \eqref{e:uHtdef}
in the $L^2$-$L^2$ norm $|||\cdot|||$.
The two approximations are compared on a sequence of meshes with mesh
size
$H=\{2^{-2},   2^{-3},   2^{-4},   2^{-5}\}\sqrt{2}$.
We consider only errors with respect to 
the reference solution $\mathbf{u}_h$.
% The errors are normalized by the respective norm of the expected reference solution.
It is observed that the quasilocal method always leads to a smaller error
than the local method. For coarse meshes we observe a convergence rate
between $H$ and $H^2$.
For fine meshes with $H\lesssim \sqrt{\varepsilon}$, the approximation by
the quasilocal method does not improve with respect to the 
previous mesh. Our interpretation is that the stochastic
error dominates in this regime.
In terms of the error estimate of 
Proposition~\ref{p:errorestimateQlocal}
this means, that the 
term $\gamma$ (resp.\ $\gamma^2$)
on the right-hand side is larger than the error
that would be possible in a deterministic setting. 
For $\varepsilon=2^{-7}\sqrt{2}$,
the values of the model error estimators
$\gamma$ and $H\eta$ are displayed in Figure~\ref{f:estimatorsiid}.
The value of $\gamma$ was rescaled as suggested in
Remark~\ref{r:gammascaling}.
It is observed that its values scale as $\varepsilon/H$. This is what one would expect from the central limit theorem because, in the planar 
case, each coarse cell covers $\mathcal{O}((H/\varepsilon)^2)$
many cells in $\T_\varepsilon$.

Figure~\ref{f:L2experrorsiid}
displays the relative errors 
$\| \mathbb{E}[\mathbf{u}]_h-u_H\|_{L^2(D)}$ and 
$\| \mathbb{E}[\mathbf{u}]_h-\tilde u_H\|_{L^2(D)}$.
On coarse meshes, the convergence rate $H^2$ can be observed.
Again, for fine meshes with $H\gtrsim\sqrt{\varepsilon}$,
no improvement is achieved through mesh-refinement. Altogether, we observe that the methods perform well up to the 
critical regime $H\approx\sqrt{\varepsilon}$ in this two-dimensional
example where a pure (local of quasi-local) deterministic approximation is no longer sufficiently accurate.

\begin{figure}
\centering
\begin{tikzpicture}[scale=.7]
\draw[step=1.0,black,very thick] (0,0) grid (4,4);
\draw[thick] (2,0)--(4,2)--(2,4)--(0,2)--cycle;
\draw[thick]  (2,1)--(3,2)--(2,3)--(1,2)--cycle;
\draw[thick]  (0,1)--(1,0)
      (3,0)--(4,1)
      (4,3)--(3,4)
      (1,4)--(0,3);
\end{tikzpicture}
\caption{Initial mesh of size $H=2^{-2}\sqrt{2}$.
         \label{f:initialmesh}}
\end{figure}
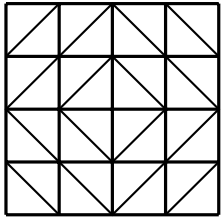

\begin{figure}[p]
\centering
\includegraphics{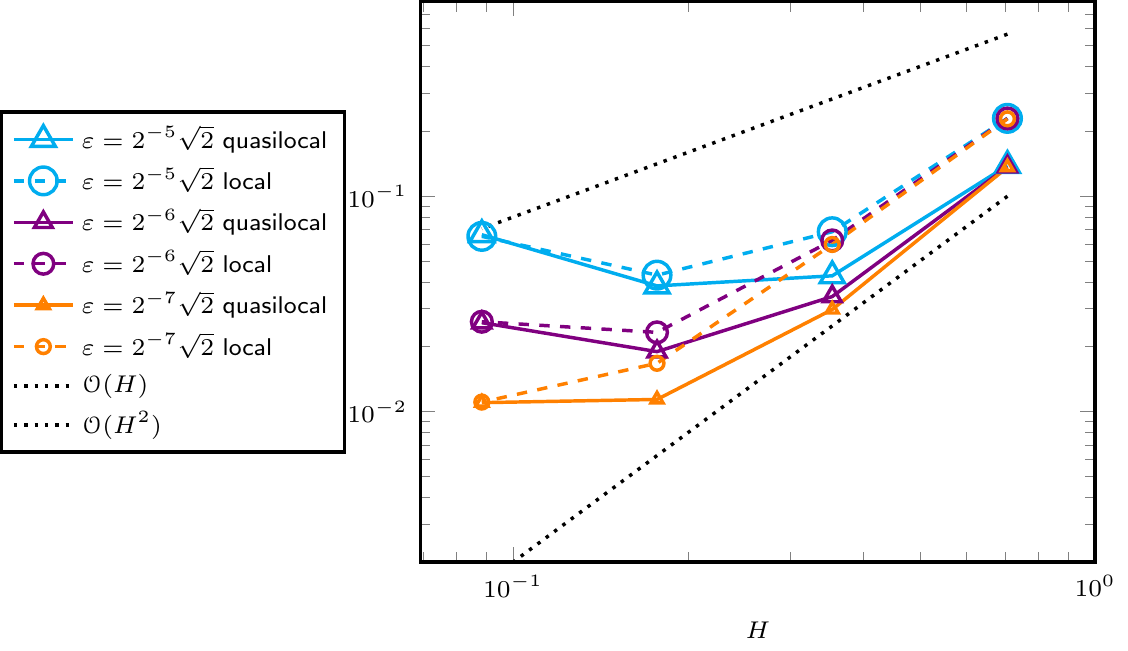}
\caption{Relative errors $||| \mathbf{u}-u_H|||$ (quasilocal)
         and $||| \mathbf{u}-\tilde u_H|||$ (local)
          in dependence of the coarse mesh size $H$.
         \label{f:L2L2errorsiid}}
\end{figure}

\begin{figure}
\centering
\includegraphics{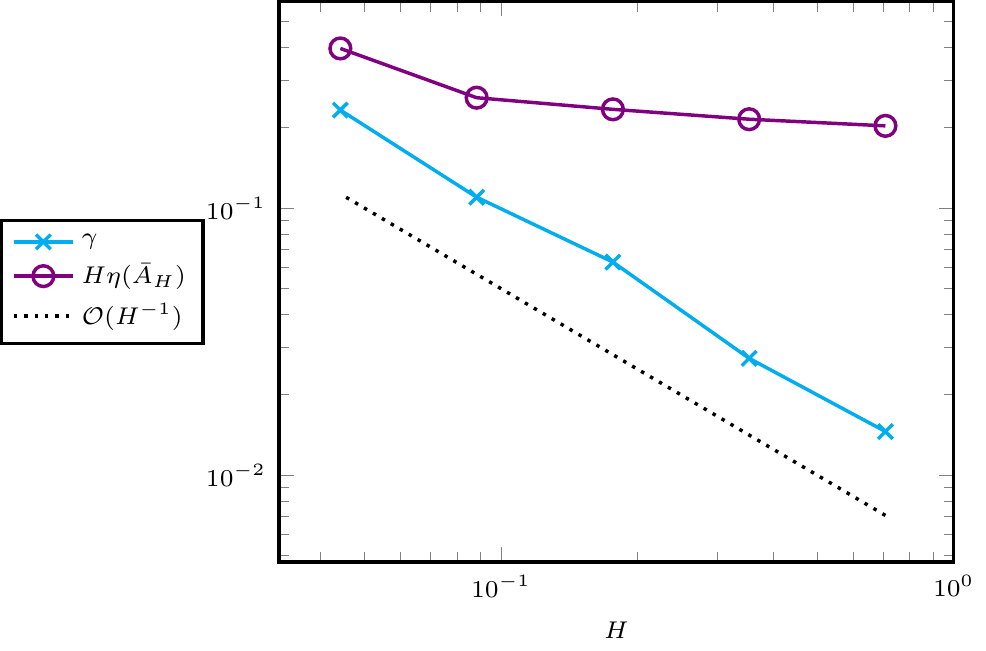}
\caption{Model error estimators $H$ for $\varepsilon=2^{-7}\sqrt{2}$.
         \label{f:estimatorsiid}}
\end{figure}

\begin{figure}[p]
\centering
\includegraphics{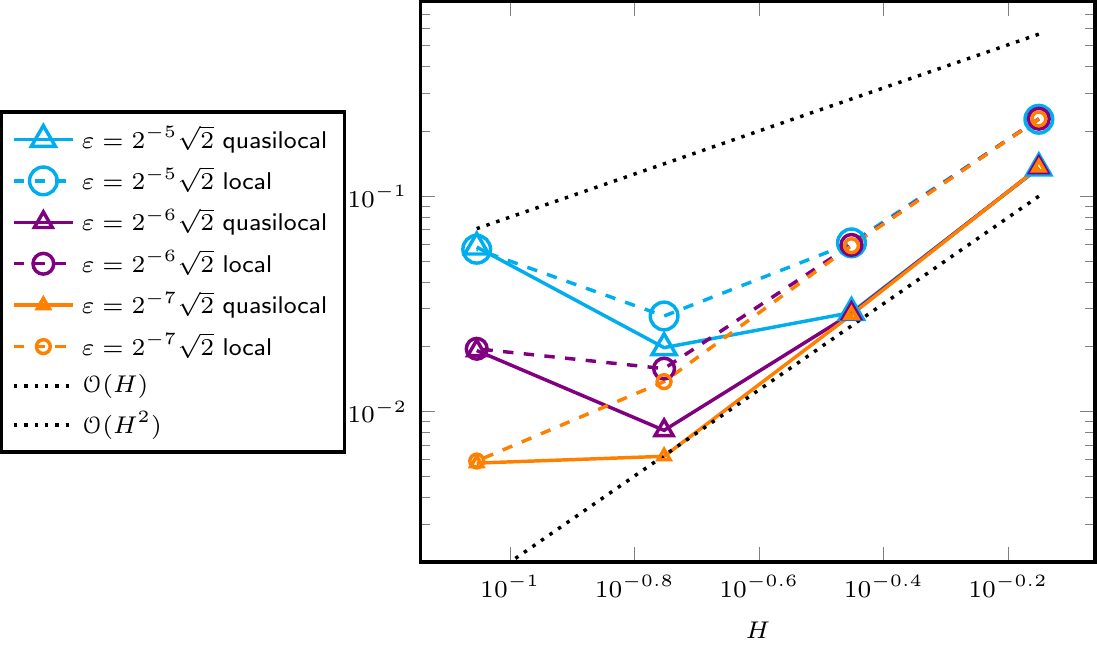}
\caption{Relative errors $\| \mathbb{E}[\mathbf{u}_h]-u_H\|_{L^2(D)}$ (quasilocal)
         and $\| \mathbb{E}[\mathbf{u}_h]-\tilde u_H\|_{L^2(D)}$ (local)
          in dependence of the coarse mesh size $H$.
         \label{f:L2experrorsiid}}
\end{figure}

\section{Conclusive comments}\label{s:conclusion}
The proposed numerical homogenization procedure approximates the 
stochastic coefficient by the expectation of a quasilocal effective
model. The design of intermediate stochastic models that carry more 
information on the stochastic dependence than a purely deterministic
coefficient will be considered in future work.
The presented error estimates are independent on any assumptions on
the uncertainty and contain an a~posteriori model error estimator
$\gamma$.
In the case that more structure on the coefficient is given,
we expect that an a~priori error estimate for $\gamma$ in terms
of $H$ and $\varepsilon$ can be derived.

{\footnotesize
\bibliographystyle{alpha}

\newcommand{\etalchar}[1]{$^{#1}$}

}

\section*{Acknowledgment}
D.~Gallistl acknowledges support by the 
Deutsche Forschungsgemeinschaft (DFG) through CRC 1173.
Main parts of this paper were written while the authors enjoyed
the kind hospitality of the Hausdorff Institute for Mathematics in Bonn during the trimester program on multiscale problems.
\end{document}